 \definecolor{darkblue}{RGB}{0,0,160}
\DeclareSymbolFont{usualmathcal}{OMS}{cmsy}{m}{n}
\DeclareSymbolFontAlphabet{\mathcal}{usualmathcal}
\numberwithin{equation}{section}
\theoremstyle{plain}
\newtheorem{theorem}{Theorem}[section]
\newtheorem{lemma}[theorem]{Lemma}
\newtheorem{proposition}[theorem]{Proposition}
\theoremstyle{definition}
\newtheorem{definition}[theorem]{Definition}
\newtheorem{problem}[theorem]{Problem}
\theoremstyle{remark}
\newtheorem{case[theorem]}{Case}
\title[\parbox{14cm}{\centering{Averages and maximal averages  \hspace{1in}}} \quad]{Averages and maximal averages over Product $j$-varieties in finite fields}
\author{ Doowon Koh and Sujin Lee}
\address{Department of Mathematics\\
Chungbuk National University \\
Cheongju, Chungbuk 28644 Korea}
\email{koh131@chungbuk.ac.kr}
\address{Department of Mathematics\\
Chungbuk National University \\
Cheongju, Chungbuk 28644 Korea}
\email{sujin4432@chungbuk.ac.kr}
\thanks{Key words and phrases: Finite field, Product $j$-variety, Maximal operator.\\
The first listed author was supported by Basic Science Research Program through the National Research Foundation of Korea(NRF) funded by the Ministry of Education, Science and Technology(NRF-2018R1D1A1B07044469)
}
\subjclass[2010]{ 52C10, 42B05, 11T23 }
\begin{document}

\begin{abstract}
We study both averaging  and maximal averaging problems for Product $j$-varieties defined by
$\Pi_j=\{x\in \mathbb F_q^d: \prod_{k=1}^d x_k=j\}$ for $j\in \mathbb F_q^*,$
where $\mathbb F_q^d$ denotes a $d$-dimensional vector space over the finite field $\mathbb F_q$ with $q$ elements. We prove the sharp $L^p\to L^r$ boundedness of averaging operators associated to  Product $j$-varieties.
We also obtain the optimal $L^p$ estimate for a maximal averaging operator related to a family of Product $j$-varieties $\{\Pi_j\}_{j\in \mathbb F_q^*}.$
\end{abstract}

\maketitle
\section{Introduction}

In recent year,  problems in Euclidean Harmonic analysis have been studied in the finite field setting, where the Euclidean structure is replaced by that of a vector space over a finite field.  This approach may be efficient to relate analysis problems to well-studied problems in  other areas such as the number theory and additive combinatorics. Moreover, problems in finite fields give us unique, interesting points as well as difficulties inherent in them.


In 1996, Wolff \cite{Wo98} suggested the finite field analogue of the Kakeya conjecture.
In 2008,  Dvir \cite{Dv09} solved this conjecture by using the polynomial method which is based on work in computer science. It has been applied to the Euclidean problems (for example, see \cite{GK15, Gu16, Gu18}).
In 2002, Mockenhaupt and Tao \cite{MT04} initially posed and studied the finite field restriction problem for algebraic varieties. Much attention has been given to this problem, in part because there exist some different restriction phenomena between the Euclidean problem and its finite field analog. We refer readers to \cite{Gr, KS12, LL12, KK14, Le15, Ko16, IKL17, RS18, IKSSP18, KPV18} for background and recent development on the finite field restriction estimates for algebraic varieties. For the setting of rings of integers, see \cite{HW18, HW19}. 


More recently,  Carbery, Stones, and Wright \cite{CSW08} introduced further harmonic analysis problems in the finite field setting. Among other things, they provided sharp results on the finite field (maximal) averaging problems associated to certain $k$-dimensional varieties generated by vector-valued polynomials. Stones \cite{St05} addressed a sharp maximal theorem for dilation of quadratic surfaces in finite fields. 


In this paper we will extend their work to more complicated varieties than quadratic surfaces. To precisely state our results, we need some notation and the definitions of the finite field averaging and  maximal averaging problems. Let $\mathbb F_q^d$ be a $d$-dimensional vector space over a finite field with $q$ elements. Throughout this paper, we assume that $q$ is an odd prime power. Namely, the characteristic of $\mathbb F_q$ is strictly greater than two. We endow the space $\mathbb F_q^d$ with normalized counting measure, denoted by $dx$, which satisfies
$$ \int_{x\in \mathbb F_q^d} f(x)~dx = q^{-d} \sum_{x\in \mathbb F_q^d} f(x),$$
where $f$ is a complex valued function on $\mathbb F_q^d.$
For $1\le s <\infty,$ we define
$$ \|f\|_{L^{s}(\mathbb F_q^d)} := \left(q^{-d}\sum_{x\in \mathbb F_q^d} |f(x)|^s\right)^{1/s}$$
and $\|f\|_{L^{\infty}(\mathbb F_q^d)} := \max_{x\in \mathbb F_q^d} |f(x)|.$

 Let $V \subset \mathbb F_q^d$ be an algebraic variety,  a set of common solutions to polynomial equations.   Normalized surface measure on $V$, denoted by $d\sigma$, is associated to the variety $V.$ Recall that 
 the surface measure $d\sigma$ is defined by the relation
$$ \int_{\mathbb F_q^d} f(x)~d\sigma(x) = \frac{1}{|V|} \sum_{x\in V} f(x),$$
where $|V|$ denotes the cardinality of $V.$

The convolution function $f\ast g$ of functions $f, g$ on $\mathbb F_q^d$ is defined by
$$ f\ast g(x):= \frac{1}{q^d} \sum_{y\in \mathbb F_q^d} f(x-y)g(y).$$
Taking $g=d\sigma$, we see that
$$f\ast d\sigma(x)= \frac{1}{|V|} \sum_{y\in V} f(x-y).$$
With the notation above, the averaging operator $A_V$ associated to $V$ is defined by
$$ A_Vf(x):=f\ast d\sigma(x),$$
where both $f$ and $A_Vf$ are complex-valued functions defined on $\mathbb F_q^d.$ 

The following averaging problem for $V$ is first posed by Carbery-Stones-Wright \cite{CSW08}.
\begin{problem} [Averaging Problem]
Let $d\sigma$ denote  normalized surface measure on an algebraic variety $V$ in $\mathbb F_q^d.$
Find all exponents $1\le p,r\le \infty$ such that  for some constant $C$ depending only on $p, r, d,$ and $V,$ the inequality
\begin{equation}\label{ADef} \|f\ast d\sigma\|_{L^r(\mathbb F_q^d)} \le C \|f\|_{L^p(\mathbb F_q^d)},\end{equation}
holds for all functions $f$ on $\mathbb F_q^d.$ 
\end{problem}
The most important condition in the finite field averaging problem is that  the operator norm is independent of the size of the underlying finite field $\mathbb F_q.$ 
We will write $A_V(p\to r)\lesssim 1$ if the averaging estimate \eqref{ADef} holds.

For each $1\le k \le d-1,$ Carbery-Stones-Wright \cite{CSW08} provided  concrete  $k$-dimensional surfaces in $\mathbb F_q^d$ for which they obtained the optimal result on the averaging  problem. Indeed, for $1\le k\le d-1,$ they considered a variety $V_k$ as the image of the  polynomial map $P_k:\mathbb F_q^k \to \mathbb F_q^d$ defined by
\begin{equation}\label{defVk} P_k(t)=(t_1,t_2, \ldots, t_k, t_1^2+\cdots+t_k^2, t_1^3+\cdots+t_k^3, \dots,  t_1^{d-k+1}+\cdots+t_k^{d-k+1} ),\end{equation}
and proved that $A_{V_k} (p\to r)\lesssim 1 $  if and only if $(1/p, 1/r)$ is contained in the convex hull of points $(0,0), (0,1), (1.1),$ and $(\frac{d}{2d-k}, \frac{d-k}{2d-k} ).$
 We notice that the following Fourier decay estimate of the surface measure $d\sigma_k$ on $V_k$ was one of the most important ingredients in proving the optimal averaging estimate related to $V_k$:
\begin{equation} \label{FDE}
\max_{m\in \mathbb F_q^d\setminus \{(0,\ldots, 0)\}} \left| (d\sigma_k)^{\vee} (m) \right|:=\max_{m\in \mathbb F_q^d\setminus \{(0,\ldots, 0)\}}\left| \frac{1}{q^k} \sum_{x\in V_k} \chi(m\cdot x) \right|  \le C q^{-\frac{k}{2}},
\end{equation}
where $C$ is independent of $q,$  and $\chi$ denotes a nontrivial additive character of $\mathbb F_q.$ 


In \cite{CSW08}, Carbery-Stones-Wright also posed the  maximal averaging problem for a family of algebraic varieties in $\mathbb F_q^d.$  Let $\mathcal{A}$ be an indexing set. For each $\alpha \in \mathcal{A}$, let $d\sigma_\alpha $ denote  normalized surface measure on an algebraic variety $V_\alpha$ in $\mathbb{F}_q^d$.
 Given any function $f : \mathbb{F}_q^d\rightarrow \mathbb{C},$ the maximal averaging operator $M$ is defined by
  \begin{equation*}
    Mf(x) := \sup_{\alpha \in \mathcal{A}}\left|f \ast d\sigma_\alpha(x)\right| \quad \textmd{for}\,\, x \in \mathbb F_q^d. \end{equation*}

\begin{problem}[Maximal Averaging Problem]
  Find all exponents $1 \leq p \leq \infty $ such that  the  inequality
 \begin{equation*}
   \|Mf\|_{L^p(\mathbb{F}_q^d)} \le C \|f\|_{L^p(\mathbb{F}_q^d)}
 \end{equation*}
holds for all complex-valued functions $f$ on $\mathbb F_q^d,$ where  the constant $C$ is independent of $q.$
\end{problem}
Carbery-Stones-Wright  \cite{CSW08} introduced  a family of varieties in $\mathbb F_q^d$ for which they deduced the optimal result on the maximal averaging problem. More precisely, they first considered an indexing set $\mathcal{A}$ with $|\mathcal{A}|= q^r$ for some $0\le r\le d-k.$  For each $\alpha\in \mathcal{A},$ letting $M_\alpha$ be an invertible $d\times d$ matrix over $\mathbb F_q$ and letting $b_\alpha$ be a vector in $\mathbb F_q^d,$ they considered the following $k$-dimensional variety
$$ V_{k,\alpha}:= \{M_\alpha x+ b_\alpha\in \mathbb F_q^d:  x\in V_k\},$$
where $V_k$ denotes the variety defined as in \eqref{defVk} and we identify a vector $x\in \mathbb F_q^d$ with  a $d\times 1$ matrix.
With the notation above, Carbery-Stones-Wright  \cite{CSW08} proved that
$$ \left\|\sup_{\alpha\in \mathcal{A}} |f\ast d\sigma_{k,\alpha}|\right\|_{L^p(\mathbb F_q^d)} \le C \|f\|_{L^p(\mathbb F_q^d)}$$
if and only if  $r\le k$ and $ p\ge \frac{r+k}{k},$
where $d\sigma_{k,\alpha}$ denotes the normalized surface measure on the variety $V_{k,\alpha}.$ Like the averaging problem for $V_k$,  the Fourier decay estimate \eqref{FDE} was mainly used in deducing the optimal result on the maximal averaging problem. 


The main purpose of this paper is to provide a complete solution of the averaging and maximal averaging problems related to certain varieties for which the Fourier decay estimate \eqref{FDE} is not satisfied.
To this end, for each $j\in \mathbb F_q^*,$ we consider an algebraic variety $\Pi_j$ in $\mathbb F_q^d$ defined by
$$ \Pi_j:=\{x=(x_1,x_2,\ldots, x_d)\in \mathbb F_q^d: \prod_{k=1}^d x_k=j\}.$$
We will call the variety $\Pi_j$ as Product $j$-variety. For each $j\in \mathbb F_q^*,$ let $d\mu_j$ be normalized surface measure on Product $j$-variety $\Pi_j.$ Unlike the Fourier decay estimate given in \eqref{FDE}, the bound of  $|(d\mu_j)^\vee(m)|$ becomes worse whenever we take any vector $m$ such that the number of zero components of $m=(m_1, m_2, \ldots, m_d)$ is large. 
\begin{definition} 
For each $m\in \mathbb F_q^d,$ we denote by $\ell_m$ the number of zero components of $m.$
\end{definition}
In fact, the inverse Fourier transform of the surface measure was explicitly computed as follows.
\begin{lemma} [\cite{CKP}, Lemma 3.1] \label{Decaymu} For each $j\in \mathbb F_q^*,$  let $d\mu_j$ denote  normalized surface measure on Product $j$-variety $\Pi_j$ in $\mathbb F_q^d.$   Then we have
\begin{equation} \label{conc1} (d\mu_j)^\vee (m)= (-1)^{d-\ell_m} ~(q-1)^{-(d-\ell_m)} \quad\mbox{if}\quad 1\le \ell_m \le d.\end{equation}
In addition, if $\ell_m =0$, then $|(d\mu_j)^\vee (m)|\lesssim q^{-\frac{(d-1)}{2}}.$
\end{lemma}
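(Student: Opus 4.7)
The plan is to compute the defining character sum
$$(d\mu_j)^\vee(m)=\frac{1}{|\Pi_j|}\sum_{x\in\Pi_j}\chi(m\cdot x)$$
directly and split on the value of $\ell_m$. Since $j\in\mathbb F_q^*$, every $x\in\Pi_j$ has all coordinates in $\mathbb F_q^*$, and fixing the first $d-1$ of them freely determines the last, giving $|\Pi_j|=(q-1)^{d-1}$.

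For the first case, suppose $1\le\ell_m\le d$. Using the obvious $S_d$-symmetry of $\Pi_j$, I would relabel coordinates so that $m_1=\cdots=m_{\ell_m}=0$ and $m_{\ell_m+1},\dots,m_d\in\mathbb F_q^*$. Then, for each choice of the last $d-\ell_m$ coordinates, the remaining constraint $x_1\cdots x_{\ell_m}=j(x_{\ell_m+1}\cdots x_d)^{-1}$ has exactly $(q-1)^{\ell_m-1}$ solutions in $(\mathbb F_q^*)^{\ell_m}$ (with the convention that this is $1$ when $\ell_m=d$). The remaining sum factors over independent units, and applying the elementary identity $\sum_{t\in\mathbb F_q^*}\chi(at)=-1$ for $a\in\mathbb F_q^*$ once for each of the $d-\ell_m$ surviving coordinates produces the claimed closed form $(-1)^{d-\ell_m}(q-1)^{-(d-\ell_m)}$ after dividing by $|\Pi_j|$.

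The case $\ell_m=0$ is the substantive one. I would perform the change of variables $y_k=m_k x_k$, a bijection of $(\mathbb F_q^*)^d$ that turns the defining equation into $y_1\cdots y_d=j\prod_{k=1}^d m_k=:j'\in\mathbb F_q^*$ and turns the phase into $y_1+\cdots+y_d$. This identifies the character sum with the hyper-Kloosterman sum
$$\mathrm{Kl}_d(j')=\sum_{\substack{y\in(\mathbb F_q^*)^d\\ y_1\cdots y_d=j'}}\chi(y_1+\cdots+y_d).$$
The main (and essentially only deep) obstacle is the decay of this sum, which is not elementary. I would invoke Deligne's bound $|\mathrm{Kl}_d(a)|\le d\,q^{(d-1)/2}$ for $a\in\mathbb F_q^*$ (which reduces to Weil's Kloosterman estimate when $d=2$). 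Dividing by $|\Pi_j|=(q-1)^{d-1}\asymp q^{d-1}$ gives $|(d\mu_j)^\vee(m)|\lesssim q^{-(d-1)/2}$, exactly the stated bound.
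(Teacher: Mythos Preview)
The paper does not prove this lemma; it simply quotes it from \cite{CKP}. Your argument is correct and is the natural one (and almost certainly the one in \cite{CKP} as well): for $1\le\ell_m\le d$ the constraint is absorbed by the block of coordinates where $m_k=0$, the remaining coordinates range freely over $\mathbb F_q^*$, and the sum factors into $d-\ell_m$ copies of $\sum_{t\in\mathbb F_q^*}\chi(at)=-1$; for $\ell_m=0$ the substitution $y_k=m_kx_k$ identifies the sum with a hyper-Kloosterman sum, and Deligne's bound $|\mathrm{Kl}_d(a)|\le d\,q^{(d-1)/2}$ finishes it.

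One small wording remark: your parenthetical ``with the convention that this is $1$ when $\ell_m=d$'' is unnecessary and slightly misleading. When $\ell_m=d$ the empty product $x_{\ell_m+1}\cdots x_d$ is $1$, the constraint is just $x_1\cdots x_d=j$, and your formula $(q-1)^{\ell_m-1}=(q-1)^{d-1}$ already gives the correct count without any special convention; the empty product of $(-1)$'s in the next step is likewise $1$, recovering $(d\mu_j)^\vee(0)=1$.
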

We may consider Product $j$-variety $\Pi_j$ in $\mathbb F_q^d$ as a $(d-1)$-dimensional surface since $|\Pi_j|\sim q^{d-1}.$ Notice that Lemma \ref{Decaymu} implies that for every $j\ne 0,$
$$ \max_{m\in \mathbb F_q^d\setminus \{(0,\ldots, 0)\}} \left| (d\mu_j)^{\vee} (m) \right| \sim q^{-1}.$$
Comparing this estimate with \eqref{FDE}, we see that if $d>3$, then
the Fourier decay on Product $j$-variety $\Pi_j$ is  much worse  than that on $V_{d-1}.$ For this reason, the same argument used by Carbery-Stones-Wright  \cite{CSW08} may not give us optimal results on both the averaging and maximal averaging problem related to Product $j$-varieties $\Pi_j.$
However, our first result below indicates that  even though the Fourier decay estimate on Product $j$-variety $\Pi_j$ is worse than that on the $V_{d-1},$  both varieties have the same mapping properties of the averaging operators.

\begin{theorem}\label{mainthm1} For each $j\in \mathbb F_q^*,$ let $d\mu_j$ denote the normalized surface measure on Product $j$-variety $\Pi_j.$  Then
$ A_{\Pi_j} (p\to r) \lesssim 1$ if and only if  $(1/p, 1/r)$ lies on the convex hull of points $(0,0), (0,1), (1.1),$ and $(\frac{d}{d+1}, \frac{1}{d+1} ).$
\end{theorem}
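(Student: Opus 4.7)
The plan is to establish the endpoint estimate $A_{\Pi_j}:L^{(d+1)/d}\to L^{d+1}$ corresponding to the vertex $(d/(d+1),1/(d+1))$; the rest of the convex hull then follows from Riesz-Thorin interpolation with the trivial bounds at $(0,0)$, $(0,1)$, and $(1,1)$, all of which reduce to $\|d\mu_j\|_{L^1}=1$ and H\"older on the probability space $\mathbb F_q^d$. Necessity of the region is routine: testing $A_{\Pi_j}$ on $f=\delta_0$ with $|\Pi_j|\sim q^{d-1}$ forces the constraint $d/p-1/r\le d-1$ (the upper-right edge), while testing on $f=\mathbf 1_{\Pi_j}$ (or $\mathbf 1_{\Pi_j}+\mathbf 1_{-\Pi_j}$ when $d$ is odd) and using $A_{\Pi_j}\mathbf 1_{\Pi_j}(0)=1$ for even $d$ produces $r\le dp$ (the lower-left edge).

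The principal obstacle for sufficiency is that Lemma \ref{Decaymu} gives only $|(d\mu_j)^\vee(m)|\sim q^{-1}$ when $\ell_m=d-1$, so the Carbery-Stones-Wright strategy based on the uniform decay \eqref{FDE} does not apply. I will circumvent this by decomposing the multiplier according to the number of vanishing coordinates of the frequency. By Fourier inversion and grouping by $\ell_m=k$, one obtains $d\mu_j=L_0+\sum_{k=1}^{d}c_k K_k$, where $c_k=(-1)^{d-k}(q-1)^{-(d-k)}$ is the constant value of the symbol on the shell $\{\ell_m=k\}$ for $k\ge 1$ (Lemma \ref{Decaymu}), $K_k(x):=\sum_{\ell_m=k}\chi(m\cdot x)$ is the Fourier projection kernel, and $L_0$ is the ``good'' remainder whose Fourier transform is supported on $\{\ell_m=0\}$ and bounded by $\lesssim q^{-(d-1)/2}$. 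Correspondingly $A_{\Pi_j}f=T_0 f+\sum_{k=1}^{d}T_kf$ with $T_0f:=f*L_0$ and $T_kf:=c_k(f*K_k)$.

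For each $T_k$, I will obtain the endpoint bound via Riesz-Thorin interpolation between its $L^1\to L^\infty$ and $L^2\to L^2$ norms. For $T_0$: Plancherel and the Fourier bound yield $\|T_0\|_{2\to 2}\lesssim q^{-(d-1)/2}$; meanwhile the crude bound $\|d\mu_j\|_\infty=q^d/|\Pi_j|\sim q$ together with the uniform estimate $\|c_k K_k\|_\infty=O(1)$ (verified below) gives $\|L_0\|_\infty\lesssim q$ and hence $\|T_0\|_{1\to\infty}\lesssim q$. Interpolating at $\theta=2/(d+1)$ yields $\|T_0\|_{L^{(d+1)/d}\to L^{d+1}}\lesssim q^{1-\theta(d+1)/2}=1$, as desired. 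For $1\le k\le d-1$: the triangle inequality gives $\|K_k\|_\infty\le \#\{m:\ell_m=k\}=\binom{d}{d-k}(q-1)^{d-k}$ (the maximum $K_k(0)$), so $\|T_k\|_{1\to\infty}=|c_k|\|K_k\|_\infty\lesssim 1$, whereas $\|T_k\|_{2\to 2}=|c_k|\lesssim q^{-1}$. Interpolation gives $\|T_k\|_{L^{(d+1)/d}\to L^{d+1}}\lesssim q^{-2/(d+1)}$, which is $o(1)$. Finally $T_df$ is the mean of $f$, trivially bounded. Summing the $d+1$ pieces closes the endpoint estimate.

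The step I expect to require the most care is the uniform bound $\|c_k K_k\|_\infty\lesssim 1$, which underpins both the $T_0$ kernel estimate and the clean interpolation scheme above. The cleanest proof proceeds via the generating-function identity $K_k(x)=[t^{d-k}](1+(q-1)t)^{\ell_x}(1-t)^{d-\ell_x}$, obtained by factoring $\sum_{m:\ell_m=k}\chi(m\cdot x)$ coordinate-by-coordinate using the identity $\sum_{m_i\neq 0}\chi(m_i x_i)=q\mathbf 1_{x_i=0}-1$; combined with $|c_k|=(q-1)^{-(d-k)}$, this immediately gives $|c_k K_k(x)|\le\binom{d}{d-k}$ uniformly in $x$ and $q$. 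With this elementary kernel estimate in hand, Riesz-Thorin assembles the endpoint bound, showing that $\Pi_j$ admits the same $L^p\to L^r$ averaging properties as the Carbery-Stones-Wright variety $V_{d-1}$ from \eqref{defVk} despite strictly worse Fourier decay.
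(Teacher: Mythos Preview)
Your argument is correct and follows the same overall architecture as the paper: decompose $(d\mu_j)^\vee$ according to the frequency shells $N_k=\{m:\ell_m=k\}$, handle the $k=0$ piece by interpolating the $L^2\to L^2$ bound $q^{-(d-1)/2}$ against an $L^1\to L^\infty$ bound $\lesssim q$, and control the $k\ge 1$ pieces separately. The differences are in the execution of two sub-steps. First, for $\|L_0\|_{L^\infty}\lesssim q$ the paper computes $\widehat{1_{N_0}}\ast d\mu_j$ directly, whereas you obtain it by writing $L_0=d\mu_j-\sum_{k\ge 1}c_kK_k$ and invoking $\|d\mu_j\|_\infty\sim q$ together with your uniform kernel bound $|c_kK_k|\le\binom{d}{k}$; your route is shorter. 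Second, for the pieces $T_k$ with $1\le k\le d-1$ the paper applies Young's inequality followed by Hausdorff--Young on $\widehat{1_{N_k}}$, which forces a separate treatment of $d=2$ (since then $(d+1)/2<2$); your choice to reuse the same $L^2\!/\!L^1\!\to\!L^\infty$ interpolation avoids this case split and works uniformly in $d$. For necessity, the paper gets the second edge $1/r\ge 1/(dp)$ by self-adjointness (duality) rather than by your test on $\mathbf 1_{\Pi_j}$ (or $\mathbf 1_{\Pi_j}+\mathbf 1_{-\Pi_j}$ for odd $d$); both are valid, though the duality argument sidesteps the parity issue.
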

Our next result is related to maximal averages associated to a family $\{\Pi_j\}_{j\in \mathbb F_q^*}$ of  Product $j$-varieties $\Pi_j.$
\begin{theorem}\label{mainthm2} For each $j\in \mathbb F_q^*,$ let $d\mu_j$ denote the normalized surface measure on Product $j$-variety $\Pi_j.$ Then there is a constant $C$ independent of $q$ such that
$$\left\|\sup_{j\in \mathbb F_q^*} |f\ast d\mu_j|\right\|_{L^p(\mathbb F_q^d)} \le C \|f\|_{L^p(\mathbb F_q^d)}$$
if and only if  $p\ge \frac{d}{d-1}.$
\end{theorem}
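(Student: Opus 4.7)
My plan is to handle the necessity and sufficiency separately, each via direct computations combined with a Fourier-analytic decomposition. For necessity, I test the inequality on $f=\mathbf 1_{\{0\}}$. Because the varieties $\Pi_j$, $j\in \mathbb F_q^*$, are disjoint with union $(\mathbb F_q^*)^d$ and each has cardinality $(q-1)^{d-1}$, one gets $f\ast d\mu_j(x)=\mathbf 1_{\Pi_j}(x)/(q-1)^{d-1}$, so $\sup_{j}|f\ast d\mu_j|(x)=(q-1)^{-(d-1)}$ for $x\in(\mathbb F_q^*)^d$ and $0$ elsewhere. Comparing $\|\sup_{j}|f\ast d\mu_j|\|_{L^p}\sim q^{-(d-1)}$ with $\|f\|_{L^p}=q^{-d/p}$ forces $d/p\le d-1$, i.e.\ $p\ge d/(d-1)$.

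For sufficiency, it is enough to prove the bound at the endpoint $p=d/(d-1)$, since the trivial $L^\infty$ estimate $\|\sup_{j}|f\ast d\mu_j|\|_{L^\infty}\le \|f\|_{L^\infty}$ combined with interpolation covers every larger exponent. To isolate the two Fourier regimes identified by Lemma~\ref{Decaymu}, I split $d\mu_j=d\nu+d\eta_j$ where $(d\nu)^\vee(m)$ agrees with $(d\mu_j)^\vee(m)$ for $\ell_m\ge 1$ (which by Lemma~\ref{Decaymu} is independent of $j$) and vanishes for $\ell_m=0$, so that $(d\eta_j)^\vee$ is supported on $\{m\in\mathbb F_q^d:\ell_m=0\}$ and satisfies $|(d\eta_j)^\vee(m)|\lesssim q^{-(d-1)/2}$. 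Then
\[
 \sup_{j\in\mathbb F_q^*}|f\ast d\mu_j(x)|\le |f\ast d\nu(x)|+\sup_{j\in\mathbb F_q^*}|f\ast d\eta_j(x)|,
\]
and the two pieces are controlled separately.

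For the $j$-independent piece I establish that $\|d\nu\|_{L^1(\mathbb F_q^d)}$ is bounded uniformly in $q$, which by Young's inequality gives $\|f\ast d\nu\|_{L^p}\lesssim\|f\|_{L^p}$. The key observation is that $(d\nu)^\vee$ has an essentially product structure: setting $\phi(t)=1$ for $t=0$ and $\phi(t)=-(q-1)^{-1}$ otherwise, Lemma~\ref{Decaymu} yields $(d\nu)^\vee(m)=\prod_k\phi(m_k)-\prod_k\phi(m_k)\mathbf 1_{\ell_m=0}$; inverting in physical space realizes $d\nu$ as the uniform probability measure on $(\mathbb F_q^*)^d$ minus a signed correction of $L^1$-norm $O(q^{-d})$.

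For the oscillatory piece, I view $\mathcal Tf=(f\ast d\eta_j)_{j\in\mathbb F_q^*}$ as a linear operator with values in a vector-valued $L^p$ space and apply Riesz--Thorin interpolation between two endpoints. At $L^1$, the triangle inequality together with $\|d\eta_j\|_{L^1}\lesssim 1$ gives $\|\mathcal Tf\|_{L^1(\ell^1)}\le\sum_j\|f\ast d\eta_j\|_{L^1}\lesssim q\|f\|_{L^1}$. At $L^2$, Plancherel combined with the Fourier decay yields $\|\mathcal Tf\|_{L^2(\ell^2)}^2=\sum_j\|f\ast d\eta_j\|_{L^2}^2\lesssim q\cdot q^{-(d-1)}\|f\|_{L^2}^2$, so the operator norm is $\lesssim q^{(2-d)/2}$. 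Interpolating at $p=d/(d-1)$ (which corresponds to $\theta=2/d$) produces the bound $q^{(d-2)/d}\cdot q^{(2-d)/d}=1$, hence $\|\mathcal Tf\|_{L^p(\ell^p)}\lesssim\|f\|_{L^p}$, and the embedding $\ell^p\hookrightarrow\ell^\infty$ transfers this into the desired maximal estimate. I expect the main technical obstacle to be the uniform $L^1$ bound on the flat kernel $d\nu$: its physical-space realization mixes the uniform measure on $(\mathbb F_q^*)^d$ with inclusion--exclusion contributions from every coordinate subspace, and the $q$-independent $L^1$ bound requires tracking these cancellations precisely.
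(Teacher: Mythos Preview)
Your proposal is correct and follows the same overall architecture as the paper: the same necessity test $f=\delta_{\mathbf 0}$, the same Fourier decomposition of $d\mu_j$ into the ``flat'' piece supported on $\{\ell_m\ge 1\}$ (which is $j$-independent by Lemma~\ref{Decaymu}) and the oscillatory piece $\Omega_j$ supported on $\{\ell_m=0\}$, and the same $L^1$--$L^2$ interpolation with exponents $q$ and $q^{(2-d)/2}$ for the oscillatory piece.

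The execution differs in two places, and in both your route is a bit more economical. For the flat piece, the paper further splits $d\nu=\sum_{k=1}^d(-1)^{d-k}(q-1)^{-d+k}\widehat{1_{N_k}}$ and bounds each summand separately via Young's inequality and Hausdorff--Young (this is its inequality~\eqref{MA22}, imported from the averaging theorem); you instead recognize the product structure $(d\nu)^\vee(m)=\prod_k\phi(m_k)$ off $N_0$, identify the inverse transform of $\prod_k\phi(m_k)$ as the uniform probability on $(\mathbb F_q^*)^d$, and obtain $\|d\nu\|_{L^1}\lesssim 1$ in one stroke. For the $L^1$ endpoint of the oscillatory piece, the paper proves the pointwise kernel bound $\|\Omega_j\|_{L^\infty}\lesssim q$ (its inequality~\eqref{maxeq}) via an explicit character-sum computation and then uses $\sup_j|f\ast\Omega_j|\le |f|\ast\sup_j|\Omega_j|$; you instead use $\|d\eta_j\|_{L^1}\le\|d\mu_j\|_{L^1}+\|d\nu\|_{L^1}\lesssim 1$, already available from the previous step, and pick up the factor $q$ simply by summing over $j\in\mathbb F_q^*$. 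Both shortcuts are valid; yours avoids the \eqref{maxeq} computation entirely.
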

\section{Necessary conditions}
In this section, we prove the necessary conditions for the averaging and maximal averaging estimates given in Theorem \ref{mainthm1} and Theorem \ref{mainthm2}, respectively.
We begin by proving the following necessary conditions for the boundedness of the averaging operator $A_{\Pi_j}$ associated with Product $j$-variety $\Pi_j.$

\begin{proposition}\label{Prop1} For each $j\in \mathbb F_q^*,$ let $d\mu_j$ denote the normalized surface measure on Product $j$-variety $\Pi_j.$ Suppose that the following inequality
\begin{equation}\label{AveN1} \|f\ast d\mu_j\|_{L^r(\mathbb F_q^d)} \lesssim \|f\|_{L^p(\mathbb F_q^d)}\end{equation}
holds for all functions $f$ on $\mathbb F_q^d.$ Then $(1/p, 1/r)$ is contained in the convex hull of points
$(0,0), (0,1), (1.1),$ and $(\frac{d}{d+1}, \frac{1}{d+1}).$
\end{proposition}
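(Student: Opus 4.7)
The convex hull in the statement is cut out by the four half-planes $1/p \ge 0$, $1/r \le 1$, $1/r \ge 1/(dp)$, and $1/r \ge d/p - (d-1)$. The first two are built into the hypothesis $1 \le p,r \le \infty$, so it suffices to derive, from the assumed bound \eqref{AveN1}, the latter two inequalities, one per test function. I will use throughout the elementary identity $|\Pi_j| = (q-1)^{d-1}$ for every $j \in \mathbb F_q^*$: the equation $\prod_k x_k = j \ne 0$ forces every coordinate of $x$ to lie in $\mathbb F_q^*$, and then $x_1,\dots,x_{d-1}$ are free while $x_d$ is determined. In particular $|\Pi_j| \sim q^{d-1}$ as $q \to \infty$.

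For the edge through $(1,1)$, I plug $f = \mathbf 1_{\{0\}}$ into \eqref{AveN1}. A direct computation gives $\|f\|_{L^p} = q^{-d/p}$ and $f \ast d\mu_j = |\Pi_j|^{-1}\,\mathbf 1_{\Pi_j}$, so
\[
\|f \ast d\mu_j\|_{L^r} = |\Pi_j|^{1/r-1}\,q^{-d/r} \sim q^{-(d-1)-1/r}.
\]
Comparing exponents of $q$ as $q \to \infty$ forces $d/p \le d-1 + 1/r$.

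For the edge through $(0,0)$, I take $f = \mathbf 1_{-\Pi_j}$, which has the same cardinality as $\Pi_j$ and hence satisfies $\|f\|_{L^p} \sim q^{-1/p}$. Since $-y \in -\Pi_j$ for every $y \in \Pi_j$, evaluating the convolution at the origin gives
\[
(f \ast d\mu_j)(0) = \frac{1}{|\Pi_j|}\sum_{y \in \Pi_j} \mathbf 1_{-\Pi_j}(-y) = 1,
\]
so retaining just the $x = 0$ term in the $L^r$-sum yields $\|f \ast d\mu_j\|_{L^r} \ge q^{-d/r}$; the hypothesis \eqref{AveN1} then forces $1/p \le d/r$. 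The only real subtlety is the choice of this second test function: a single-point test alone only produces the first inequality, and one cannot simply replace $\mathbf 1_{-\Pi_j}$ by $\mathbf 1_{\Pi_j}$ when $d$ is odd, since then $\Pi_j \cap (-\Pi_j) = \Pi_j \cap \Pi_{-j} = \emptyset$ and the crucial evaluation at the origin would vanish. The natural explanation is that the adjoint of $A_{\Pi_j}$ under the normalized counting measure is $A_{-\Pi_j}$, so the ``dual'' of the single-point test is precisely the test against $\mathbf 1_{-\Pi_j}$.
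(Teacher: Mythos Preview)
Your proof is correct and morally the same as the paper's: both test with $f=\delta_{\mathbf 0}$ to obtain the constraint $1/r \ge d/p - (d-1)$, and both derive the remaining constraint $1/r \ge 1/(dp)$ by duality. The only difference is in packaging. The paper invokes duality abstractly---it asserts that ``the averaging operator is self-adjoint'' and then applies the $\delta_{\mathbf 0}$ test to the adjoint $L^{r'}\to L^{p'}$ bound---whereas you unwind this into an explicit second test function $f=\mathbf 1_{-\Pi_j}$. Your route is in fact a shade more careful: convolution with $d\mu_j$ is literally self-adjoint only when $d$ is even (for odd $d$ one has $-\Pi_j=\Pi_{-j}$, so the adjoint is $A_{\Pi_{-j}}$), and your choice of $\mathbf 1_{-\Pi_j}$ rather than $\mathbf 1_{\Pi_j}$ handles both parities uniformly, exactly as you observe in your closing remark.
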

\begin{proof} We test the inequality \eqref{AveN1} with $f=\delta_{\mathbf{0}},$
where $\delta_{\mathbf{0}}(x)=1$ if $x=(0,\ldots, 0)$, and $0$ otherwise.
Taking $f=\delta_{\mathbf{0}},$ we have
\begin{equation*}
  \|f\|_{L^p(\mathbb{F}_q^d)}=\left(\frac{1}{q^d}\sum_{x\in\mathbb{F}_q^d}|\delta_{\textbf{0}}(x)|^p\right)^{\frac{1}{p}}=q^{-\frac{d}{p}}
\end{equation*}
and
\begin{align*}
 \|f\ast d\mu_j\|_{L^r(\mathbb{F}_q^d)} &= \left(\frac{1}{q^d}\sum_{x\in\mathbb{F}_q^d}|(\delta_{\textbf{0}}\ast d\mu_j)(x)|^r\right)^{\frac{1}{r}} \\
       & =\left(\frac{1}{q^d}\sum_{x\in \Pi_j}|\Pi_j|^{-r}\right)^{\frac{1}{r}}\sim q^{-\frac{d}{r}+\frac{(d-1)(1-r)}{r}}. \end{align*}
Hence, invoking the assumption \eqref{AveN1}  we obtain a necessary condition:
\begin{equation} \label{NecK1}
  \frac{1}{r}\geq \frac{d}{p}-d+1.
\end{equation}
Since the averaging operator is self-adjoint, we also have
$$ \frac{1}{p'}\geq \frac{d}{r'}-d+1$$
which is equivalent to
\begin{equation} \label{NecK2}
\frac{1}{r}\geq\frac{1}{dp}.
 \end{equation}
Since $1\le p, r\le \infty$,  the proposition follows from \eqref{NecK1} and \eqref{NecK2}.
\end{proof}
We now state and prove the necessary conditions for the boundedness of the maximal averaging operator given in Theorem \ref{mainthm2}.
\begin{proposition}
Assume that the following maximal averaging estimate for a family of Product $j$-varieties$\Pi_j$ holds for all functions $f$ on $\mathbb F_q^d:$
\begin{equation}\label{MAveN1}\left\|\sup_{j\in \mathbb F_q^*} |f\ast d\mu_j|\right\|_{L^p(\mathbb F_q^d)} \lesssim \|f\|_{L^p(\mathbb F_q^d)}.\end{equation}
Then we have
$$ p\ge \frac{d}{d-1}.$$
\end{proposition}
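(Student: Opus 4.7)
The plan is to test the maximal inequality \eqref{MAveN1} with the simplest possible input, $f=\delta_{\mathbf{0}}$, the indicator function of the origin, and then extract the required relation between $p$ and $d$ by balancing powers of $q$.

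First I would unfold the convolution: since $\delta_{\mathbf{0}}\ast d\mu_j(x)=|\Pi_j|^{-1}\mathbf{1}_{\Pi_j}(x)$ and $|\Pi_j|\sim q^{d-1}$, each average is comparable to $q^{-(d-1)}\mathbf{1}_{\Pi_j}(x)$. Next I would identify the set on which the supremum is supported. The varieties $\{\Pi_j\}_{j\in\mathbb F_q^*}$ are the nonzero level sets of the polynomial $x_1x_2\cdots x_d$ and are therefore pairwise disjoint, with
\[
\bigcup_{j\in\mathbb F_q^*}\Pi_j=\{x\in\mathbb F_q^d:x_1x_2\cdots x_d\neq 0\}=(\mathbb F_q^*)^d,
\]
a set of cardinality $(q-1)^d\sim q^d$. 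Hence
\[
\sup_{j\in\mathbb F_q^*}\bigl|\delta_{\mathbf{0}}\ast d\mu_j(x)\bigr|\sim q^{-(d-1)}\,\mathbf{1}_{(\mathbb F_q^*)^d}(x).
\]

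Then I would compute both sides of \eqref{MAveN1}. On the right, $\|\delta_{\mathbf{0}}\|_{L^p(\mathbb F_q^d)}=q^{-d/p}$. On the left,
\[
\Bigl\|\sup_{j\in\mathbb F_q^*}|\delta_{\mathbf{0}}\ast d\mu_j|\Bigr\|_{L^p(\mathbb F_q^d)}^p\sim q^{-d}\cdot q^d\cdot q^{-(d-1)p}=q^{-(d-1)p},
\]
so the left-hand side is $\sim q^{-(d-1)}$. Substituting into the hypothesized inequality gives $q^{-(d-1)}\lesssim q^{-d/p}$ with an implicit constant independent of $q$. Letting $q\to\infty$, this forces $d-1\ge d/p$, i.e.\ $p\ge d/(d-1)$, which is the desired necessary condition.

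There is no real obstacle: the only point requiring a sentence of justification is that the $\Pi_j$ for $j\in\mathbb F_q^*$ are disjoint with union exactly $(\mathbb F_q^*)^d$, which is immediate from the definition. The argument is purely a size-counting extremizer test, analogous to the test function $f=\delta_{\mathbf{0}}$ used in Proposition \ref{Prop1}, except that here we profit from the fact that the entire family $\{\Pi_j\}_{j\in\mathbb F_q^*}$ sweeps out nearly all of $\mathbb F_q^d$ rather than a single variety of size $\sim q^{d-1}$.
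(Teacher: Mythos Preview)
Your proposal is correct and follows essentially the same approach as the paper's proof: test the inequality with $f=\delta_{\mathbf{0}}$, observe that each $\delta_{\mathbf{0}}\ast d\mu_j\sim q^{-(d-1)}\mathbf{1}_{\Pi_j}$, and use that the union $\bigcup_{j\in\mathbb F_q^*}\Pi_j$ has cardinality $\sim q^d$ to compute both sides. Your write-up is in fact slightly more explicit than the paper's, in that you identify the union as $(\mathbb F_q^*)^d$ and note the disjointness of the $\Pi_j$.
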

\begin{proof}
As in the proof of Proposition \ref{Prop1}, we test the inequality \eqref{MAveN1} with $f=\delta_{\mathbf{0}}.$ Then it follows that
$$ \|f\|_{L^p(\mathbb{F}_q^d)}=q^{-\frac{d}{p}}.$$
On the other hand, we have
$$ \left\|\sup_{j\in \mathbb F_q^*} |f\ast d\mu_j|\right\|_{L^p(\mathbb F_q^d)}\ge \left(\frac{1}{q^d}\sum_{x\in \bigcup\limits_{j\in \mathbb F_q^*} \Pi_j} \left(\sup_{j\in \mathbb F_q^*}|(\delta_{\textbf{0}}\ast d\mu_j)(x)|\right)^p\right)^{\frac{1}{p}} $$
Since $(\delta_{\mathbf{0}}\ast d\mu_j)(x)= \frac{1}{|\Pi_j|} 1_{\Pi_j}(x) \sim q^{-(d-1)} 1_{\Pi_j}(x),$ we have
$$\left\|\sup_{j\in \mathbb F_q^*} |f\ast d\mu_j|\right\|_{L^p(\mathbb F_q^d)}\gg q^{-\frac{d}{p}} q^{-d+1} \left(\sum_{x\in \bigcup\limits_{j\in \mathbb F_q^*} \Pi_j} \left(\sup_{j\in \mathbb F_q^*} 1_{\Pi_j}(x)\right)^p\right)^{\frac{1}{p}} \sim q^{-\frac{d}{p}} q^{-d+1} q^{\frac{d}{p}} = q^{-d+1}.$$
Thus, the proposition follows from the hypothesis \eqref{MAveN1}.
\end{proof}

\section{Sufficient conditions}

\begin{theorem} \label{SuffA}
If $(1/p, 1/r)$ is contained in the convex hull of points $(0,0), (0,1), (1,1)$ and $ (\frac{d}{d+1}, \frac{1}{d+1}),$ then the averaging inequality
\begin{equation}\label{AES} \|f\ast d\mu_j\|_{L^r(\mathbb F_q^d)} \lesssim \|f\|_{L^p(\mathbb F_q^d)}\end{equation}
holds for all functions $f$ on $\mathbb F_q^d$ and all $j\ne 0.$
\end{theorem}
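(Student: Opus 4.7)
My plan is to reduce the problem to verifying the $L^p \to L^r$ bound at the single critical exponent $(1/p, 1/r) = (d/(d+1), 1/(d+1))$ and then invoke Riesz-Thorin interpolation to fill in the entire convex hull. The bounds at the other three extreme points follow immediately from Young's convolution inequality together with $\|d\mu_j\|_{L^1} = 1$: both $L^\infty \to L^\infty$ and $L^1 \to L^1$ hold with norm $1$, and the $L^\infty \to L^1$ estimate at $(0,1)$ follows via the nesting $\|g\|_{L^1} \le \|g\|_{L^\infty}$ on the probability space $\mathbb F_q^d$.

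To establish the critical bound $\|f \ast d\mu_j\|_{L^{d+1}} \lesssim \|f\|_{L^{(d+1)/d}}$ with constant independent of $q$, I would decompose $d\mu_j$ on the Fourier side by the value of $\ell_m$. For each $0 \le L \le d$ let $K_L$ denote the function whose Fourier transform is $(d\mu_j)^\vee \cdot \mathbf{1}\{\ell_m = L\}$, so that $d\mu_j = \sum_{L=0}^{d} K_L$. Since $\ell_m = d$ iff $m=0$, we have $K_d \equiv 1$ and $f \ast K_d = \int f$ is a constant, trivially bounded. For the piece $K_0$, supported on the stratum $\{\ell_m = 0\}$, Lemma \ref{Decaymu} supplies $|K_0^\vee(m)| \lesssim q^{-(d-1)/2}$ and hence Plancherel yields $\|f \ast K_0\|_{L^2} \lesssim q^{-(d-1)/2}\|f\|_{L^2}$; combined with the crude bound $\|K_0\|_{L^\infty} \lesssim q$ (which follows from $\|d\mu_j\|_{L^\infty}\sim q$ together with uniform-in-$q$ bounds $\|K_L\|_{L^\infty}=O(1)$ for $L\ge 1$), this gives $\|f\ast K_0\|_{L^\infty}\lesssim q\|f\|_{L^1}$. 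Riesz-Thorin interpolation at $\theta = 2/(d+1)$ then yields exactly $\|f\ast K_0\|_{L^{d+1}} \lesssim \|f\|_{L^{(d+1)/d}}$, reproducing the Carbery-Stones-Wright argument for this well-behaved piece.

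The main obstacle is the intermediate range $1 \le L \le d-1$, where Lemma \ref{Decaymu} only provides the weak Fourier decay $|(d\mu_j)^\vee(m)| = (q-1)^{-(d-L)}$; a direct Plancherel-plus-Young approach would produce an unwanted loss of $q^{(d-3)/(d+1)}$ when $d \ge 4$. My key observation is that on the stratum $\{\ell_m = L\}$ the Fourier coefficient $(d\mu_j)^\vee(m) = (-1)^{d-L}(q-1)^{-(d-L)}$ depends only on $L$, so a character-sum/inclusion-exclusion calculation expresses $K_L$ explicitly as a $(q-1)^{-(d-L)}$-weighted combination of indicator functions $\mathbf{1}_{V(S)}$ of the coordinate subspaces $V(S) = \{x : x_i = 0 \text{ for } i \in S\}$ with $|S| \le d-L$, carrying extra factors $q^{|S|}$ and bounded integer coefficients. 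Convolution with $\mathbf{1}_{V(S)}$ equals $q^{-|S|} P_S$ (so the factor $q^{|S|}$ cancels), where $P_S$ is the normalized averaging operator over $V(S)$; Jensen's inequality gives $\|P_S f\|_{L^{(d+1)/d}} \le \|f\|_{L^{(d+1)/d}}$, a Hölder estimate gives $\|P_S f\|_{L^\infty} \le q^{|S|d/(d+1)}\|f\|_{L^{(d+1)/d}}$, and log-convexity of $L^r$-norms yields $\|P_S f\|_{L^{d+1}} \le q^{|S|(d-1)/(d+1)}\|f\|_{L^{(d+1)/d}}$. Combining with the prefactor $(q-1)^{-(d-L)}$ and the worst-case choice $|S| = d-L$ produces $\|f \ast K_L\|_{L^{d+1}} \lesssim q^{-2(d-L)/(d+1)}\|f\|_{L^{(d+1)/d}}$, which is $o(1)$ in $q$ for each $L \in \{1,\ldots,d-1\}$ and sums to $O(1)$. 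Adding the three contributions $L=0$, $1\le L\le d-1$, and $L=d$ closes the argument and delivers the critical estimate uniformly in $q$ and $j$.
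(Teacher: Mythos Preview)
Your outline is correct and follows the same architecture as the paper: reduce to the critical endpoint $(1/p,1/r)=(d/(d+1),1/(d+1))$, decompose $d\mu_j$ on the Fourier side according to the strata $\{\ell_m=L\}$, and treat the $L=0$ piece by interpolating an $L^2\!\to\!L^2$ Plancherel bound against an $L^1\!\to\!L^\infty$ bound of size $q$. The differences are in the execution of two steps.

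For the intermediate pieces $K_L$, $1\le L\le d-1$, the paper simply applies Young's inequality and then Hausdorff--Young to bound $\|\widehat{1_{N_L}}\|_{L^{(d+1)/2}}$ by $|N_L|^{(d-1)/(d+1)}$; this forces a short separate argument when $d=2$ because $(d+1)/2<2$ there. Your inclusion--exclusion computation of $\widehat{1_{N_L}}$ as a signed combination of $q^{|S|}\,1_{V(S)}$ and the subsequent subspace-averaging bounds arrive at exactly the same operator norm $q^{-2(d-L)/(d+1)}$, but the argument is uniform in $d$ and avoids the $d=2$ case split. For the bound $\|K_0\|_{L^\infty}\lesssim q$, the paper proves the equivalent statement $\max_x |(\widehat{1_{N_0}}\ast d\mu_j)(x)|\lesssim q$ by a direct character-sum calculation, whereas you obtain it by subtraction from $\|d\mu_j\|_{L^\infty}\sim q$ using $\|K_L\|_{L^\infty}=O(1)$ for $L\ge1$; note that this last fact is not free --- it follows from your explicit formula for $K_L$, so in a full write-up you should establish that formula before invoking it for $K_0$. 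Finally, a minor slip: in your Riesz--Thorin step for $K_0$ the interpolation parameter is $\theta=(d-1)/(d+1)$ (you wrote $2/(d+1)$, which is $1-\theta$); the resulting constant is still $O(1)$ as you claim.
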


\begin{proof}
Since $(f\ast d\mu_j)(x)= |\Pi_j|^{-1} \sum_{x\in \Pi_j} f(x)$ for all $x\in \mathbb F_q^d,$ it is clear that the inequality \eqref{AES} holds in the case when $(1/p, 1/r)=(0,0).$ By a direct computation,  it is also true for $(1/p, 1/r)=(1,1).$ Thus, by using an interpolation theorem and nesting property of norms,  we only need to verify
the inequality \eqref{AES} for $p=\frac{d+1}{d}$ and $r=d+1.$ In other words, we aim to prove that the averaging estimate
\begin{equation} \label{MA}\|f\ast d\mu_j\|_{L^{d+1}(\mathbb F_q^d)} \lesssim \|f\|_{L^{\frac{d+1}{d} }(\mathbb F_q^d)}\end{equation}
holds for all function $f$ on $\mathbb F_q^d.$ For each $m\in \mathbb F_q^d$, let $\ell_m$ be the number of zero components of the vector $m$. Now, for each $k=0, 1, \ldots, d,$ we define
\begin{equation*} N_k:=\{m\in \mathbb F_q^d: \ell_m=k\}.\end{equation*}
It is obvious that $\mathbb F_q^d = \bigcup\limits_{k=0}^d N_k.$
Since ${(d\mu_j)}^\vee =  1_{N_0}(d\mu_j)^\vee  + \sum\limits_{k=1}^d 1_{N_k} (d\mu_j)^\vee ,$
we can decompose $d\mu_j$ as
\begin{equation}\label{insert1} d\mu_j = \widehat{1_{N_0}} \ast d\mu_j + \sum_{k=1}^d \widehat{1_{N_k}} \ast d\mu_j.\end{equation}
By the definition of $N_k$ and  the first part \eqref{conc1} of Lemma \ref{Decaymu}, we see that for $k=1,2, \ldots, d,$
$$ 1_{N_k} (d\mu_j)^\vee = (-1)^{d-k} (q-1)^{-d+k} 1_{N_k},$$
which in turn gives us
$$ \widehat{1_{N_k}} \ast d\mu_j =  (-1)^{d-k} (q-1)^{-d+k} \widehat{1_{N_k}}.$$
This can be combined with \eqref{insert1} to see that
\begin{equation}\label{defmuj}  d\mu_j = \widehat{1_{N_0}} \ast d\mu_j + \sum_{k=1}^d  (-1)^{d-k} (q-1)^{-d+k} \widehat{1_{N_k}}.\end{equation}
For each $j\ne 0,$ let $\Omega_j=  \widehat{1_{N_0}} \ast d\mu_j.$ We have
$$\|f\ast d\mu_j\|_{L^{d+1}(\mathbb F_q^d)} \le \|f\ast \Omega_j\|_{L^{d+1}(\mathbb F_q^d)} +\sum_{k=1}^d   (q-1)^{-d+k} \|f\ast \widehat{1_{N_k}} \|_{L^{d+1}(\mathbb F_q^d)}.$$
Hence, to prove \eqref{MA}, it will be enough to verify the following estimates:
\begin{equation}\label{MA21} \|f\ast \Omega_j\|_{L^{d+1}(\mathbb F_q^d)} \lesssim \|f\|_{L^{\frac{d+1}{d} }(\mathbb F_q^d)},
\end{equation}
and for every $k=1,2,\ldots, d,$
\begin{equation} \label{MA22}
(q-1)^{-d+k} \|f\ast \widehat{1_{N_k}} \|_{L^{d+1}(\mathbb F_q^d)}\lesssim \|f\|_{L^{\frac{d+1}{d} }(\mathbb F_q^d)}.
 \end{equation}

\subsection{proof of the inequality \eqref{MA21}} We notice that the inequality \eqref{MA21} can be obtained by interpolating the following two estimates:
\begin{equation}\label{twotwo}
 \|f\ast \Omega_j\|_{L^2(\mathbb{F}_q^d)} \lesssim q^{-\frac{d-1}{2}}\|f\|_{L^2(\mathbb{F}_q^d)},
 \end{equation}
 \begin{equation}\label{infty1}
   \|f\ast \Omega_j\|_{L^{\infty}(\mathbb{F}_q^d)} \lesssim q\|f\|_{L^1(\mathbb{F}_q^d)}.
  \end{equation}
Thus, we only need to show that the inequalities \eqref{infty1} and \eqref{twotwo}  hold for any functions $f$ on $\mathbb F_q^d.$
We can easily verify the inequality \eqref{twotwo} by applying the Plancherel theorem and the second conclusion of  Lemma \ref{Decaymu} as follows:
\begin{align*}
 \|f\ast \Omega_j\|_{L^2(\mathbb{F}_q^d)} &=\| f^\vee \Omega_j^\vee\|_{\ell^2(\mathbb{F}_q^d)}\\
         &=\left(\sum_{m\in N_0}|f^\vee(m)|^2|(d\mu_j)^{\vee}(m)|^2\right)^{\frac{1}{2}}\\
         & \lesssim q^{-\frac{d-1}{2}}\left(\sum_{m\in N_0}|\widehat{f}(m)|^2\right)^{\frac{1}{2}} \\
          & \lesssim q^{-\frac{d-1}{2}}\|\widehat{f}\|_{\ell^2(\mathbb{F}_q^d)}
         = q^{-\frac{d-1}{2}}\|f\|_{L^2(\mathbb{F}_q^d)}.
        \end{align*}
To prove the inequality \eqref{infty1}, we first notice by Young's inequality for convolution functions that
$$ \|f\ast \Omega_j\|_{L^{\infty}(\mathbb{F}_q^d)} \le \|\widehat{1_{N_0}} \ast d\mu_j\|_{L^\infty(\mathbb F_q^d)} ~\|f\|_{L^1(\mathbb F_q^d)}.$$
Then the inequality \eqref{infty1} will be established by showing that the inequality
\begin{equation}\label{maxeq} \max_{x\in \mathbb F_q^d} |(\widehat{1_{N_0}} \ast d\mu_j)(x)|\lesssim q
\end{equation}
holds for all $j\in \mathbb F_q^*.$
To prove this inequality, we fix $x\in \mathbb F_q^d, j\in \mathbb F_q^*$, and observe that
$$ |(\widehat{1_{N_0}} \ast d\mu_j)(x)|= \left|\frac{1}{|\Pi_j|} \sum_{y\in \Pi_j} \widehat{1_{N_0}}(x-y)\right| = \left| \frac{1}{|\Pi_j|} \sum_{y\in \Pi_j} \sum_{m\in N_0} \chi(m\cdot (y-x))\right|$$
$$\le |\Pi_j|^{-1} \sum_{y\in \mathbb F_q^d} \left| \sum_{m\in N_0} \chi(m\cdot (y-x))\right|
= |\Pi_j|^{-1}  \sum_{z\in \mathbb F_q^d}  \left| \sum_{m\in N_0} \chi(m\cdot z)\right|$$
$$\sim q^{-d+1} \sum_{k=0}^d \sum_{y\in N_k} \left| \sum_{m_1, m_2,\ldots, m_d\in \mathbb F_q^*} \chi(m\cdot z)\right|.$$
By using the orthogonality of $\chi$ and the definition of $N_k$, we conclude
$$|(\widehat{1_{N_0}} \ast d\mu_j)(x)| \lesssim q^{-d+1} \sum_{k=0}^d |N_k| (q-1)^k \sim q,$$
where the above similarity follows from the fact that  $|N_k|\sim q^{d-k}.$
This proves the inequality \eqref{maxeq}, as required. We have finished the proof of the inequality \eqref{MA21}.

\subsection{proof of the inequality \eqref{MA22}}
By Young's inequality for convolution functions, we have
$$ (q-1)^{-d+k} \|f\ast \widehat{1_{N_k}} \|_{L^{d+1}(\mathbb F_q^d)} \le q^{-d+k}\|f\|_{L^{\frac{d+1}{d}}(\mathbb{F}_q^d)}\|\widehat{1_{N_k}}\|_{L^{\frac{d+1}{2}}(\mathbb{F}_q^d)}.$$
Hence, to prove the  inequality \eqref{MA22}, it suffices to show that for each $k=1,2, \ldots, d,$
\begin{equation*}\label{AAk1}
\|\widehat{1_{N_k}}\|_{L^{\frac{d+1}{2}}(\mathbb{F}_q^d)} \lesssim q^{d-k}.
\end{equation*}
We will prove this inequality  separately in the cases of  $d=2$ and $d\ge 3.$

\textbf{Case 1:} Let $d\ge 3.$ 
Since $ 2\le  (d+1)/2 <\infty$ for $d\ge 3,$ we can invoke the Hausdorff-Young inequality to deduce the required estimate. More precisely, we have
$$\|\widehat{1_{N_k}}\|_{L^{\frac{d+1}{2}}(\mathbb{F}_q^d)} \le  \|1_{N_k}\|_{\ell^{\frac{d+1}{d-1}}(\mathbb{F}_q^d)} = |N_k|^{\frac{d-1}{d+1}} \sim q^{(d-k) \frac{d-1}{d+1}} \le q^{d-k},$$
as desired.

\textbf{Case 2:}  Let $d=2.$ We aim to prove that for $k=1,2,$ 
$$ \|\widehat{1_{N_k}}\|_{L^{\frac{3}{2}}(\mathbb{F}_q^2)} \lesssim q^{2-k}.$$
For  $k=2,$  it is clear that $N_2=\{(0,0)\}.$  Hence,  the above inequality  follows  by observing  $\widehat{1_{N_2}}(x)=1 $ for all $x\in \mathbb F_q^2.$ 
To prove the above inequality for the case of $k=1,$  we first notice that 
$$N_1=\left(\mathbb F_q^*\times \{0\} \right) \cup \left( \{0\}\times \mathbb F_q^*\right)$$
which implies that $|N_1|= 2(q-1).$
For any $x\in \mathbb F_q^2,$  we have
$$ |\widehat{N_1}(x)| \le |N_1|\sim q.$$
Therefore, it follows that 
$$ \|\widehat{1_{N_1}}\|_{L^{\frac{3}{2}}(\mathbb{F}_q^2)} \le \max_{x\in \mathbb F_q^2}  |\widehat{1_{N_1}}(x)| \sim q.$$
This completes the proof of  the inequality  \eqref{MA22}.
\end{proof}

\begin{theorem}\label{SuffMA} If  $p\ge \frac{d}{d-1}$, then the following maximal averaging estimate for a family $\{\Pi_j\}_{j\in \mathbb F_q^*}$ of Product $j$-varieties $\Pi_j$ holds for all functions $f$ on $\mathbb F_q^d:$
\begin{equation}\label{MASuff}\left\|\sup_{j\in \mathbb F_q^*} |f\ast d\mu_j|\right\|_{L^p(\mathbb F_q^d)} \lesssim \|f\|_{L^p(\mathbb F_q^d)}.\end{equation}
\end{theorem}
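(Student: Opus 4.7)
The plan is to reduce to the endpoint $p=d/(d-1)$ and to reuse the decomposition
\begin{equation*}
d\mu_j \;=\; \Omega_j + \sum_{k=1}^{d}(-1)^{d-k}(q-1)^{-d+k}\widehat{1_{N_k}}, \qquad \Omega_j:=\widehat{1_{N_0}}\ast d\mu_j,
\end{equation*}
already established in the proof of Theorem~\ref{SuffA}. For $p>d/(d-1)$ the statement will follow from Marcinkiewicz interpolation between the endpoint and the pointwise trivial bound $\sup_{j\in\mathbb F_q^*}|f\ast d\mu_j|\le\|f\|_{L^\infty(\mathbb F_q^d)}$, so only the endpoint is at stake. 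The crucial feature of the decomposition is that every summand with $k\ge 1$ is independent of $j$; consequently
\begin{equation*}
\sup_{j\in\mathbb F_q^*}|f\ast d\mu_j|\;\le\;\sup_{j\in\mathbb F_q^*}|f\ast\Omega_j|\;+\;\sum_{k=1}^{d}(q-1)^{-d+k}|f\ast\widehat{1_{N_k}}|,
\end{equation*}
so the supremum has effectively been pushed onto the single piece $\Omega_j$.

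For each $k\ge 1$, factoring the character sum $\sum_{m\in N_k}\chi(-m\cdot x)$ coordinate by coordinate, exactly as was done for $k=0$ in the derivation of \eqref{maxeq}, yields the crude pointwise bound $|\widehat{1_{N_k}}(x)|\lesssim (q-1)^{d-k}$. Young's inequality then produces $(q-1)^{-d+k}\|f\ast\widehat{1_{N_k}}\|_{L^p(\mathbb F_q^d)}\lesssim\|f\|_{L^p(\mathbb F_q^d)}$ for every $p\in[1,\infty]$ and every $k=1,\dots,d$, so this $j$-independent tail is absorbed into $\|f\|_{L^p}$.

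For the $\Omega_j$-piece I apply the classical sum-of-$p$-th-powers trick
\begin{equation*}
\Big\|\sup_{j\in\mathbb F_q^*}|f\ast\Omega_j|\Big\|_{L^p(\mathbb F_q^d)}^{p}\;\le\;\sum_{j\in\mathbb F_q^*}\|f\ast\Omega_j\|_{L^p(\mathbb F_q^d)}^{p},
\end{equation*}
which reduces matters to an individual $L^p\to L^p$ bound for $\Omega_j$ with a small operator norm. This in turn I obtain by Riesz--Thorin interpolation between the $L^2$ estimate \eqref{twotwo} and a uniform $L^1$ estimate $\|f\ast\Omega_j\|_{L^1}\le\|\Omega_j\|_{L^1}\|f\|_{L^1}\lesssim\|f\|_{L^1}$, producing $\|f\ast\Omega_j\|_{L^p}\lesssim q^{-(d-1)/p'}\|f\|_{L^p}$ for $1\le p\le 2$. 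Raising to the $p$-th power and summing over the $q-1$ choices of $j$ yields a factor $(q-1)\,q^{-(d-1)(p-1)}$, which is $O(1)$ precisely when $p\ge d/(d-1)$, matching the sharp endpoint (note $d/(d-1)\le 2$, so the range $1\le p\le 2$ covers it). The main obstacle is that the proof of Theorem~\ref{SuffA} only supplies the $L^1\to L^\infty$ bound \eqref{infty1} for $\Omega_j$ with norm $q$, and interpolating that with \eqref{twotwo} would yield only $L^p\to L^{p'}$ estimates, inadequate for the sum-of-powers trick. I will therefore establish the stronger uniform $L^1\to L^1$ bound $\|\Omega_j\|_{L^1}\lesssim 1$ directly from $\Omega_j=\widehat{1_{N_0}}\ast d\mu_j$, $\|d\mu_j\|_{L^1}=1$, and the explicit evaluation $|\widehat{1_{N_0}}(x)|=(q-1)^{\ell_x}$, which paired against the counts $|\{x\in\mathbb F_q^d\colon\ell_x=\ell\}|=\binom{d}{\ell}(q-1)^{d-\ell}$ telescopes to $\|\widehat{1_{N_0}}\|_{L^1}= 2^d(q-1)^d q^{-d}\lesssim 1$.
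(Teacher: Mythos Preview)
Your proof is correct and follows the same outer skeleton as the paper (trivial $L^\infty$ bound, reduction to the endpoint, the decomposition \eqref{defmuj}, and the observation that the $k\ge 1$ summands are $j$-independent), but the two core ingredients are handled differently.

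For the tail $\sum_{k\ge 1}$, the paper does not use your pointwise bound $|\widehat{1_{N_k}}|\lesssim (q-1)^{d-k}$; it instead quotes \eqref{MA22} and passes from $L^{d+1}$ to $L^{d/(d-1)}$ by nesting of norms. Your route is more direct and bypasses the Hausdorff--Young/case-analysis argument used for \eqref{MA22}.

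For the main piece $\Omega_j$, the paper proves two bounds for the maximal operator itself,
\[
\Big\|\sup_{j}|f\ast\Omega_j|\Big\|_{L^1}\lesssim q\,\|f\|_{L^1}
\quad\text{and}\quad
\Big\|\sup_{j}|f\ast\Omega_j|\Big\|_{L^2}\lesssim q^{(2-d)/2}\,\|f\|_{L^2},
\]
and then interpolates the sublinear maximal operator between $L^1$ and $L^2$. The $L^1$ bound comes from the pointwise estimate \eqref{maxeq} ($\|\Omega_j\|_{L^\infty}\lesssim q$), and the $L^2$ bound is exactly your sum-of-powers trick specialised to $p=2$. You reverse the order: first interpolate the \emph{linear} operator $f\mapsto f\ast\Omega_j$ between $L^1$ and $L^2$, then apply the $\ell^p$-sum over $j$ at the endpoint. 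The price is that you need the sharper $L^1\to L^1$ input $\|\Omega_j\|_{L^1}\lesssim 1$, which you supply via the explicit product evaluation $|\widehat{1_{N_0}}(x)|=(q-1)^{\ell_x}$ and the binomial count $\|\widehat{1_{N_0}}\|_{L^1}=2^d(q-1)^dq^{-d}$. This observation is not in the paper; it replaces the somewhat longer computation behind \eqref{maxeq} and makes the argument a bit cleaner, while the paper's route stays closer to the Carbery--Stones--Wright template of interpolating the maximal operator directly.
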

\begin{proof} It is not hard to check that for every $x\in \mathbb F_q^d,$
$$ \sup_{j\in \mathbb F_q^*} |(f\ast d\mu_j)(x)| \le \max_{y\in \mathbb F_q^d} |f(y)|.$$
Hence, the inequality \eqref{MASuff} is true for $p=\infty.$  By interpolation theorem, it therefore suffices to prove the inequality \eqref{MASuff} for $p=\frac{d}{d-1}.$ Namely, our task is to verify the following estimate:
\begin{equation}\label{ourgoal}\left\|\sup_{j\in \mathbb F_q^*} |f\ast d\mu_j|\right\|_{L^{\frac{d}{d-1}}(\mathbb F_q^d)} \lesssim \|f\|_{L^{\frac{d}{d-1}}(\mathbb F_q^d)}.\end{equation}
As in \eqref{defmuj}, for each $j\in \mathbb F_q^*,$ we can write
$$  d\mu_j = \Omega_j+ \sum_{k=1}^d  (-1)^{d-k} (q-1)^{-d+k} \widehat{1_{N_k}},$$
where $\Omega_j:=\widehat{1_{N_0}} \ast d\mu_j.$  Thus we have
$$\left\|\sup_{j\in \mathbb F_q^*} |f\ast d\mu_j|\right\|_{L^{\frac{d}{d-1}}(\mathbb F_q^d)} \le
\left\|\sup_{j\in \mathbb F_q^*} |f\ast \Omega_j|\right\|_{L^{\frac{d}{d-1}}(\mathbb F_q^d)}
+ \sum_{k=1}^d  (q-1)^{-d+k}\left\| f\ast \widehat{1_{N_k}} \right\|_{L^{\frac{d}{d-1}}(\mathbb F_q^d)}.$$
Therefore, to prove \eqref{ourgoal} (namely, to complete the proof of Theorem \ref{SuffMA}), it suffices to prove the following estimates:
\begin{equation}\label{MA31} \left\|\sup_{j\in \mathbb F_q^*} |f\ast \Omega_j|\right\|_{L^{\frac{d}{d-1}}(\mathbb F_q^d)} \lesssim \|f\|_{L^{\frac{d}{d-1}}(\mathbb F_q^d)},
\end{equation}
and for every $k=1,2,\ldots, d,$
\begin{equation} \label{MA32}
(q-1)^{-d+k}\left\| f\ast \widehat{1_{N_k}} \right\|_{L^{\frac{d}{d-1}}(\mathbb F_q^d)}\lesssim \|f\|_{L^{\frac{d}{d-1}}(\mathbb F_q^d)}.
 \end{equation}
First, let us verify the inequality  \eqref{MA32}.  Notice by the H\H{o}lder inequality that if $1\le t_1\le t_2\le \infty,$ then
$$ \|f\|_{L^{t_1}(\mathbb F_q^d)} \le \|f\|_{L^{t_2}(\mathbb F_q^d)}.$$
By this nesting property of norms, it is not hard to see that the inequality \eqref{MA32} follows from the inequality \eqref{MA22}.

Finally, we prove the inequality \eqref{MA31} which is a direct consequence from interpolating the following two estimates:
\begin{equation}\label{Inter11}
\left\|\sup_{j\in \mathbb F_q^*} |f\ast \Omega_j|\right\|_{L^{1}(\mathbb F_q^d)} \lesssim q \|f\|_{L^{1}(\mathbb F_q^d)},\end{equation}
\begin{equation}\label{Inter22} \left\|\sup_{j\in \mathbb F_q^*} |f\ast \Omega_j|\right\|_{L^{2}(\mathbb F_q^d)} \lesssim q^{\frac{2-d}{2}} \|f\|_{L^{2}(\mathbb F_q^d)}. \end{equation}
\end{proof}

Hence, to finish the proof, it remains to prove the inequalities \eqref{Inter11} and \eqref{Inter22}, which can be done by adapting an argument from \cite{CSW08}. The details are as follows.
The inequality \eqref{Inter11} follows, because we have
\begin{align*} \left\|\sup_{j\in \mathbb F_q^*} |f\ast \Omega_j|\right\|_{L^{1}(\mathbb F_q^d)}
&\le \left\| |f|\ast \sup_{j\in \mathbb F_q^*}|\Omega_j|\right\|_{L^{1}(\mathbb F_q^d)}\\
&\le \| f\|_{L^1(\mathbb F_q^d)} ~\left\|\sup_{j\in \mathbb F_q^*}|\Omega_j|\right\|_{L^1(\mathbb F_q^d)} \lesssim q \|f\|_{L^{1}(\mathbb F_q^d)},\end{align*}
where the last inequality $\lesssim$ is a direct consequence from the inequality \eqref{maxeq}.

For the inequality \eqref{Inter22}, we have
\begin{align*} \left\|\sup_{j\in \mathbb F_q^*} |f\ast \Omega_j|\right\|_{L^{2}(\mathbb F_q^d)}
&\le \left\|\left(\sum_{j\in \mathbb F_q^*} |f\ast \Omega_j|^2\right)^{\frac{1}{2}}\right\|_{L^{2}(\mathbb F_q^d)} =\left(\sum_{j\in \mathbb F_q^*} \|f\ast \Omega_j\|^2_{L^2(\mathbb F_q^d)}\right)^{\frac{1}{2}}\\
&\le \left( \sum_{j\in \mathbb F_q^*} \left(\max_{m\in \mathbb F_q^d} |\Omega_j^{\vee}(m)|^2 \right) \|f^\vee\|^2_{\ell^2(\mathbb F_q^d)} \right)^{\frac{1}{2}}\\
&\le \left(\max_{j\in \mathbb F_q^*, m\in \mathbb F_q^d} |\Omega_j^{\vee}(m)|\right) q^{\frac{1}{2}}  \|f\|_{L^2(\mathbb F_q^d)}.
 \end{align*}
Since $\Omega_j^{\vee} = 1_{N_0}~ (d\mu_j)^\vee,$   the second part of Lemma \ref{Decaymu} implies that
the maximum value above is dominated by $\sim q^{-\frac{d-1}{2}},$ and hence the inequality \eqref{Inter22} is obtained, as desired. We have finished the proof of Theorem \ref{mainthm2}.

\bibliographystyle{amsplain}

\end{document}